\documentclass[12pt]{article}
\usepackage{amsmath,amssymb,amsthm}
\newtheorem{lemma}{Lemma}
\newtheorem{thm}[lemma]{Theorem}
\newcommand{\C}{\mathcal{C}}

\begin{document}

\begin{center}
\Large
Tiling a unit square with 8 squares
\end{center}

\begin{flushright}
Iwan Praton \\
Franklin and Marshall College \\
Lancaster, PA 17604\\
\verb+iwan.praton@fandm.edu+
\end{flushright}

Suppose a unit square is packed with $n$ squares of side lengths
$s_1, s_2,\ldots, s_n$. We define 
$\psi_1(n)=\max \sum_{i=1}^n s_i$, where the maximum is taken
over all possible packings of the unit square.
Not a lot is known about the function $\psi_1$.
Erd{\H o}s [1] asked whether 
$\psi_1(k^2+1)=k$. More generally,
Erd{\H o}s and Soifer [2] presented
explicit packings that provided lower bounds
of $\psi_1(n)$ for all (nonsquare) $n$; they mentioned that
these lower bounds appear to be good. Thus we
have tentative values for $\psi_1(n)$.

In [3] Staton and Tyler introduced
two modifications of $\psi_1$ as follows. Define a \emph{right
packing} to be a packing by squares whose
sides are parallel to the sides of the unit square. Then $\psi_2(n)$
is defined to be $\max \sum s_i$ where the maximum is taken
over all right packings with $n$ squares. Also, $\psi_3(n)$, for
$n\neq 2, 3, 5$, is
defined to be $\max \sum s_i$, where the maximum is now taken
over all right \emph{tilings} with $n$ squares.
(A tiling is a packing where the unit square is completely filled.
The unit square can be tiled with $n$ squares for all values of
$n$ except for $n=2,3,5$, thus the restriction on $n$ in
the definition of $\psi_3$.) It is clear that
$\psi_1(n)\geq \psi_2(n)\geq \psi_3(n)$. Staton and Tyler asked for what
values of $n$ we have $\psi_1(n)=\psi_2(n)=\psi_3(n)$. 

There
are some reasons to suspect that the three functions might be
identical. The packings constructed by Erd{\H o}s and Soifer 
in [2] are actually tilings, except when $n$ differs by 1 from
a square integer. Staton and Tyler in [3] took care
of the case when $n$ is one more than a perfect square
by constructing tilings whose sums of edge lengths
are the same as the Erd{\H o}s-Soifer lower bounds.
Thus if the Erd{\H o}s-Soifer conjecture
 is correct, then 
$\psi_1(n)=\psi_2(n)=\psi_3(n)$ for all values of $n$
except possibly when $n$ is one less than a perfect square.
In
this note we show that, alas, $\psi_2(n)\neq 
\psi_3(n)$ when $n=8$; more precisely, we show
that $\psi_3(8)=2.6$. 

We first define our terminology and notation. All packings
and tilings in this paper of the unit square, so we will
omit the phrase ``of the unit square'' in what follows.
If $A$ is a square, its side length is denoted by $s_A$.
If $\mathcal{C}=\{A_1,\ldots,A_n\}$ is a collection of 
squares, we write $\sigma(\mathcal{C})=
\sigma(A_1,\dots,A_n)$ for $\sum s_{A_i}$. 

Here is an upper bound due to
Erd\"{o}s; the proof below appeared in Erd\"{o}s and Soifer [2].
\begin{lemma}\label{CS}
If $\mathcal{C}$ is a collection of $n$ squares with
total area $A$, then $\sigma(\mathcal{C})
\leq \sqrt{nA}$, with equality only if the $n$ squares
are the same size.
\end{lemma}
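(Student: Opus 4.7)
The plan is to invoke the Cauchy--Schwarz inequality, which is strongly hinted at by the label \texttt{CS} on the lemma. Writing $\sigma(\mathcal{C})=\sum_{i=1}^n s_{A_i}$ as an inner product of the vector $(s_{A_1},\dots,s_{A_n})$ with the all-ones vector $(1,\dots,1)$, Cauchy--Schwarz yields
\[
\sigma(\mathcal{C})^2 \;=\; \Bigl(\sum_{i=1}^n 1\cdot s_{A_i}\Bigr)^{\!2} \;\leq\; \Bigl(\sum_{i=1}^n 1^2\Bigr)\Bigl(\sum_{i=1}^n s_{A_i}^2\Bigr) \;=\; n\cdot A,
\]
using the fact that the total area $A$ equals $\sum s_{A_i}^2$ by definition. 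Taking square roots gives the claimed bound $\sigma(\mathcal{C})\leq\sqrt{nA}$.

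For the equality clause, I would simply recall the equality condition in Cauchy--Schwarz: equality holds iff the two vectors are proportional. Since one of them is $(1,\dots,1)$, this forces $(s_{A_1},\dots,s_{A_n})$ to be a constant vector, i.e., all the $s_{A_i}$ are equal, which is what it means for the $n$ squares to have the same size.

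There is essentially no obstacle here; the only thing to be careful about is making sure the area identity $A=\sum s_{A_i}^2$ is stated explicitly (it is the definition of total area for a collection of axis-parallel or arbitrarily-oriented squares), and noting that the lemma makes no assumption that the squares form a packing or tiling---it is a purely algebraic statement about $n$ nonnegative numbers constrained by their sum of squares.
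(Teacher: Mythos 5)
Your proof is correct and is exactly the argument the paper gives: apply Cauchy--Schwarz to $(1,\dots,1)$ and $(s_{A_1},\dots,s_{A_n})$, using $A=\sum s_{A_i}^2$, with the equality case following from proportionality of the two vectors. You have merely written out the details that the paper leaves implicit.
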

\begin{proof}
Let $s_1,\dots, s_n$ be the side lengths
of the $n$ squares. Apply the Cauchy-Schwarz
inequality to the $n$-component
vectors $(1,1,\dots,1)$ and $(s_1,\dots,s_n)$. 
\end{proof}

As an immediate consequence, we
see that $\psi_3(8)\leq \sqrt{8}$.
We also get a lower bound from an explicit
construction: the tiling

\begin{center}
\unitlength 0.5mm
\begin{picture}(50.00,50.00)(0,0)

\linethickness{0.15mm}
\put(0.00,0.00){\line(1,0){50.00}}
\put(0.00,0.00){\line(0,1){50.00}}
\put(50.00,0.00){\line(0,1){50.00}}
\put(0.00,50.00){\line(1,0){50.00}}

\linethickness{0.3mm}
\put(40.00,40.00){\line(1,0){10.00}}
\put(40.00,40.00){\line(0,1){10.00}}
\put(50.00,40.00){\line(0,1){10.00}}
\put(40.00,50.00){\line(1,0){10.00}}

\linethickness{0.3mm}
\put(40.00,30.00){\line(1,0){10.00}}
\put(40.00,30.00){\line(0,1){10.00}}
\put(50.00,30.00){\line(0,1){10.00}}
\put(40.00,40.00){\line(1,0){10.00}}

\linethickness{0.3mm}
\put(30.00,20.00){\line(1,0){10.00}}
\put(30.00,20.00){\line(0,1){10.00}}
\put(40.00,20.00){\line(0,1){10.00}}
\put(40.00,30.00){\line(1,0){10.00}}

\linethickness{0.3mm}
\put(40.00,20.00){\line(1,0){10.00}}
\put(40.00,20.00){\line(0,1){10.00}}
\put(50.00,20.00){\line(0,1){10.00}}
\put(40.00,30.00){\line(1,0){10.00}}

\linethickness{0.3mm}
\put(0.00,30.00){\line(1,0){20.00}}
\put(0.00,30.00){\line(0,1){20.00}}
\put(20.00,30.00){\line(0,1){20.00}}
\put(0.00,50.00){\line(1,0){20.00}}

\linethickness{0.3mm}
\put(20.00,30.00){\line(1,0){20.00}}
\put(20.00,30.00){\line(0,1){20.00}}
\put(40.00,30.00){\line(0,1){20.00}}
\put(20.00,50.00){\line(1,0){20.00}}

\linethickness{0.3mm}
\put(30.00,0.00){\line(1,0){20.00}}
\put(30.00,0.00){\line(0,1){20.00}}
\put(50.00,0.00){\line(0,1){20.00}}
\put(30.00,20.00){\line(1,0){20.00}}

\linethickness{0.3mm}
\put(0.00,0.00){\line(1,0){30.00}}
\put(0.00,0.00){\line(0,1){30.00}}
\put(30.00,0.00){\line(0,1){30.00}}
\put(0.00,30.00){\line(1,0){30.00}}

\end{picture}
\end{center}
shows that $\psi_3(8)\geq 2.6$. 
To show that $\psi_3(8)=2.6$, 
we need to investigate the actual
tiling in more detail.

Put our unit square so its corners are at
$(0,0)$, $(1,0)$, $(0,1)$, and $(1,1)$.
Let $\C$ be any tiling of this square
with 8 tiles.
For any $c$ where $0< c < 1$,
we define $\C_c$ to be the set of
tiles whose interior intersect 
the vertical line $x=c$. We want
to avoid the case where there is
a tile with a vertical edge on the line
$x=c$ (such a line is called
\emph{ambiguous} by Staton and
Tyler [2]), so we will assume
forthwith that the vertical line
$x=c$ is not ambiguous. Thus
$\sigma(\C_c)=1$. Note
that there is an unambiguous line
as close as we want to an
ambiguous line.

The values $c=0$ and $c=1$
are special. We call the line $x=0$ 
the \emph{left coast} and the line $x=1$ the
\emph{right coast}. The \emph{left coastal tiles} 
$\C_0$ are 
the tiles that have an edge on the left coast. 
Similarly, the \emph{right coastal tiles} $\C_1$
are those tiles with an edge
on the right coast. Their union is the
set of \emph{coastal tiles}. Tiles that
are not coastal tiles are called \emph{inland
tiles}. There are not too many of these.

\begin{lemma}\label{inlandtiles}
The sum of the side lengths of all inland
tiles is less than 1.
\end{lemma}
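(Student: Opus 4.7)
The plan is to show that the side lengths of the left coastal tiles sum to exactly $1$, and likewise for the right coastal tiles, so that the inland contribution equals $\sigma(\C) - 2$; then Lemma \ref{CS} forces $\sigma(\C) \le 2\sqrt 2$, giving the inland sum at most $2\sqrt 2 - 2$, which is strictly less than $1$.

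First I would verify that $\sum_{T\in\C_0} s_T = 1$. Since the tiling is right, every left coastal tile meets the left coast along a vertical segment whose length is exactly its side length. Every point of $\{0\}\times[0,1]$ must lie in some tile, and for an axis-aligned square to contain such a point it must have its left edge on $x=0$, so it is left coastal. Hence the left edges of the tiles in $\C_0$ partition $\{0\}\times[0,1]$, and their lengths sum to $1$. The symmetric argument on the right coast gives $\sum_{T\in\C_1}s_T=1$. Moreover $\C_0 \cap \C_1 = \emptyset$, because any tile with edges on both coasts would have side length $1$ and coincide with the unit square, contradicting $n=8$.

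Since $\C_0$, $\C_1$, and the set $\C_{\mathrm{in}}$ of inland tiles partition $\C$, we have $\sigma(\C_{\mathrm{in}}) = \sigma(\C) - \sigma(\C_0) - \sigma(\C_1) = \sigma(\C) - 2$. Applying Lemma \ref{CS} with $n=8$ and total area $A=1$ yields $\sigma(\C) \le \sqrt 8 = 2\sqrt 2$. Therefore $\sigma(\C_{\mathrm{in}}) \le 2\sqrt 2 - 2$, and since $2\sqrt 2 < 3$, this is strictly less than $1$.

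There is no serious obstacle: the result is an immediate corollary of Lemma \ref{CS} once one observes that coastal side lengths sum to exactly $1$ on each coast. The bound produced ($\approx 0.828$) is not tight, but it is comfortably below $1$, which is all that is needed.
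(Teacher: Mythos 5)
Your proposal is correct and is essentially the paper's own argument: both rest on the facts that $\sigma(\C_0)=\sigma(\C_1)=1$ and that Lemma~\ref{CS} gives $\sigma(\C)\le\sqrt{8}<3$, so the inland contribution is at most $\sqrt{8}-2<1$. You merely phrase it directly (and supply the routine justification that each coast's tile edges sum to $1$ and that $\C_0\cap\C_1=\emptyset$) where the paper argues by contradiction.
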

\begin{proof}
For any tiling $\C$, we know that
$\sigma(\C)\leq \sqrt{8}<3$. We have
$\sigma(\C_0)=\sigma(C_1)=1$. If the sum of
the side lengths of inland tiles is 1
or more, then $\sigma(\C)\geq 1+1+1
=3$, a contradiction. 
\end{proof}

\begin{lemma}\label{noinlines}
For any $0<c<1$, the set $\C_c$
contains at least one coastal tile.
\end{lemma}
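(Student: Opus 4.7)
The plan is to argue by contradiction, leveraging the two facts already established: that $\sigma(\mathcal{C}_c) = 1$ for any unambiguous vertical line $x = c$ (noted in the paragraph setting up the notation), and that the total side length of inland tiles is strictly less than $1$ (Lemma \ref{inlandtiles}).

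First I would suppose, toward a contradiction, that $\mathcal{C}_c$ contains no coastal tile, so every tile in $\mathcal{C}_c$ is inland. Then $\mathcal{C}_c$ is a subset of the collection of inland tiles, and therefore $\sigma(\mathcal{C}_c)$ is at most the sum of side lengths of all inland tiles. By Lemma \ref{inlandtiles} this sum is less than $1$, so $\sigma(\mathcal{C}_c) < 1$.

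On the other hand, since the line $x = c$ is unambiguous, the tiles in $\mathcal{C}_c$ partition the vertical segment from $(c,0)$ to $(c,1)$; because each tile has sides parallel to the axes and its interior meets the line, the portion of the line inside such a tile has length exactly equal to the tile's side. Hence $\sigma(\mathcal{C}_c) = 1$, contradicting the inequality above. This completes the argument.

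There is essentially no obstacle here: the lemma is a two-line consequence of the preceding setup. The only thing to be a little careful about is the identification of $\sigma(\mathcal{C}_c)$ with the length of the intercepted segment, which uses that this is a \emph{right} tiling (all edges axis-parallel) and that $x = c$ is unambiguous.
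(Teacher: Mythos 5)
Your proof is correct and is essentially identical to the paper's: assume $\C_c$ has only inland tiles, note $\sigma(\C_c)=1$ for an unambiguous line, and contradict Lemma~\ref{inlandtiles}. The extra care you take in justifying $\sigma(\C_c)=1$ is fine but was already established in the paper's setup.
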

\begin{proof}
Otherwise $\C_c$ contains only
inland tiles. Since $\sigma(C_c)=1$,
this contradicts Lemma~\ref{inlandtiles}.
\end{proof}

\begin{lemma}\label{twobigtiles}
There is a tile $A\in \C_0$ and $B\in \C_1$ such
that $s_A+s_B=1$. 
\end{lemma}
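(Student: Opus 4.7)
The plan is to take $A$ to be a largest tile in $\C_0$ and $B$ a largest tile in $\C_1$, and show $s_A+s_B=1$ by proving the two matching inequalities separately. Both $\C_0$ and $\C_1$ are nonempty since their elements partition the two coasts, and $A\neq B$ since a tile with edges on both coasts would have side length $1$ and be the whole unit square, contradicting the presence of eight tiles.

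For the upper bound $s_A+s_B\leq 1$, I would argue by contradiction: if $s_A+s_B>1$, then the interval $(1-s_B,\,s_A)$ is nonempty, so I can choose an unambiguous $c$ inside it. At such a $c$, both $A\in\C_c$ (its right edge is at $s_A>c$) and $B\in\C_c$ (its left edge is at $1-s_B<c$). Since $A$ and $B$ are distinct tiles in $\C_c$ and $\sigma(\C_c)=1$, this forces $s_A+s_B\leq 1$, a contradiction.

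For the lower bound $s_A+s_B\geq 1$, I would pick an unambiguous $c$ slightly larger than $s_A$. Because $A$ has the maximum side length in $\C_0$, every left coastal tile $A'$ satisfies $s_{A'}\leq s_A<c$, so $\C_c$ contains no left coastal tile. Lemma~\ref{noinlines} then forces some right coastal tile $B'$ to lie in $\C_c$, which means $1-s_{B'}<c$, i.e., $s_{B'}>1-c$. Since $B$ is the largest right coastal tile, $s_B\geq s_{B'}>1-c$. Letting $c$ approach $s_A$ from above (through unambiguous values, which exist arbitrarily close to $s_A$ since there are only finitely many ambiguous lines) gives $s_B\geq 1-s_A$.

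Combining the two inequalities yields $s_A+s_B=1$, as required. The only delicate point in the argument is verifying that an unambiguous $c$ can be found in the short intervals used above, but this is immediate from the remark in the paper that unambiguous lines occur arbitrarily close to any chosen location and from the finiteness of $\C$. I do not anticipate a genuine obstacle once the choice of $A$ and $B$ as the largest coastal tiles on each side is made.
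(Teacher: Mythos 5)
Your proof is correct and takes essentially the same route as the paper: choose the largest tile on each coast and apply Lemma~\ref{noinlines} to an unambiguous vertical line in the gap $(s_A,\,1-s_B)$ to force $s_A+s_B\geq 1$. You additionally verify the reverse inequality $s_A+s_B\leq 1$ (by placing an unambiguous line through both tiles and using $\sigma(\C_c)=1$), a step the paper leaves implicit when it jumps from ruling out $a+b<1$ to concluding $a+b=1$.
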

\begin{proof}
Let $a$ denote the maximum side lengths of
all left coastal tiles; similarly, let $b$ denote
the maximum side length of all right coastal tiles.
If $a+b<1$, then there exists a value $x_0$
(where $a<x_0<1-b$) such that the line
$x=x_0$ does not intersect any coastal tiles. 
This contradicts
Lemma~\ref{noinlines}. Thus $a+b=1$, which
is what we want.
\end{proof}
\noindent \textbf{Note}: the proof works just as well when
we turn the tiling 90 degrees. Thus there
exist two tiles, one with an edge on the line
$y=0$, and one with an edge on the line
$y=1$, such that the total edge lengths of
these two tiles is 1.

Suppose as in Lemma~\ref{twobigtiles}
we have tiles $A\in \C_0$ and $B\in \C_1$
with $s_A+s_B=1$. 
\begin{lemma}
One (or both) of $A$ and $B$ is a corner
tile.
\end{lemma}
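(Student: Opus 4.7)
The plan is a contradiction argument driven by the Cauchy--Schwarz bound of Lemma~\ref{CS}. Suppose neither $A$ nor $B$ is a corner tile. Since $n=8$, no tile has side $1$, and in particular no tile can contain two corners of the unit square; hence the four corners lie in four distinct corner tiles $D_1,D_2,D_3,D_4$. The tiles $A,B,D_1,D_2,D_3,D_4$ are then all distinct: $A\neq D_2,D_4$ and $B\neq D_1,D_3$ because the left and right coasts are disjoint, while $A\neq D_1,D_3$ and $B\neq D_2,D_4$ by the hypothesis that neither $A$ nor $B$ is a corner tile. Only two other tiles of $\C$ remain.

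I would then partition $\C$ into disjoint classes: the six distinguished tiles, together with $L'$ (extra left coastal tiles), $R'$ (extra right coastal), $B'$ (non-corner bottom coastal), $T'$ (non-corner top coastal), and $F$ (fully interior). Setting $\alpha=s_{D_1}+s_{D_2}+s_{D_3}+s_{D_4}$, the left- and right-coast identities $\sigma(\C_0)=\sigma(\C_1)=1$ together with $s_A+s_B=1$ give $\alpha+\sigma(L')+\sigma(R')=1$; in particular $\alpha\leq 1$. The rotated analogues for the top and bottom coasts yield $\sigma(B')+\sigma(T')=2-\alpha$. Summing all side lengths of $\C$ then produces the identity $\sigma(\C)=4-\alpha+\sigma(F)$.

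Lemma~\ref{CS} closes the argument: $\sigma(\C)\leq\sqrt{8}<3$ forces $4-\alpha+\sigma(F)<3$, and hence $\alpha>1+\sigma(F)\geq 1$, contradicting $\alpha\leq 1$. The only place that needs real care is the bookkeeping for the partition: each corner tile $D_i$ lies on two coasts, so I must verify that it is counted exactly once in $\sigma(\C)$ while still contributing to both relevant coast sums. Note that the argument uses $n=8$ essentially, since the slack $\sqrt{n}<3$ that drives the contradiction disappears once $n\geq 9$.
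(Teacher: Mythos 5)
Your argument is correct, and it does reach the contradiction by a different route than the paper. You work globally: summing the four coast identities, observing that a tile lies on two coasts exactly when it is a corner tile (a tile touching two perpendicular coasts must contain the corner between them, and no tile has side $1$ when $n=8$), and arriving at the inclusion--exclusion identity $\sigma(\C)=4-\alpha+\sigma(F)$, which together with $\sigma(\C)\leq\sqrt{8}<3$ forces $\alpha>1$; the hypothesis that neither $A$ nor $B$ is a corner tile is what gives the competing bound $\alpha\leq 1$ from the left and right coast sums plus $s_A+s_B=1$. Your bookkeeping does close up correctly, since the classes $L',R',B',T',F$ and the six distinguished tiles are pairwise disjoint for exactly the reason you flag. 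The paper's proof is much shorter: it simply notes that a non-corner left or right coastal tile touches neither $y=0$ nor $y=1$, so after rotating the tiling by $90$ degrees both $A$ and $B$ are \emph{inland} tiles in the sense of Lemma~\ref{inlandtiles}, and $s_A+s_B=1$ then directly contradicts that lemma. Both arguments ultimately run on the same fuel --- the bound $\sqrt{8}<3$ and the fact that each coast sums to $1$ --- but the paper packages it once in Lemma~\ref{inlandtiles} and reuses it via rotation, whereas your version re-derives it through an explicit four-coast count; a small dividend of your computation is the general fact that in any $8$-tiling the four corner tiles satisfy $\alpha>1+\sigma(F)$.
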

\begin{proof}
Suppose not. 

\begin{center}
\unitlength 0.5mm
\begin{picture}(50.00,50.00)(0,0)

\linethickness{0.15mm}
\put(0.00,0.00){\line(1,0){50.00}}
\put(0.00,0.00){\line(0,1){50.00}}
\put(50.00,0.00){\line(0,1){50.00}}
\put(0.00,50.00){\line(1,0){50.00}}

\linethickness{0.3mm}
\put(30.00,17.00){\line(1,0){20.00}}
\put(30.00,17.00){\line(0,1){20.00}}
\put(50.00,17.00){\line(0,1){20.00}}
\put(30.00,37.00){\line(1,0){20.00}}

\linethickness{0.3mm}
\put(00.00,10.00){\line(1,0){30.00}}
\put(00.00,10.00){\line(0,1){30.00}}
\put(30.00,10.00){\line(0,1){30.00}}
\put(00.00,40.00){\line(1,0){30.00}}

\put(15.00,25.00){\makebox(0,0)[cc]{A}}

\put(40.00,26.00){\makebox(0,0)[cc]{B}}

\end{picture}
\qquad
\unitlength 0.5mm
\begin{picture}(50.00,50.00)(0,0)

\linethickness{0.15mm}
\put(0.00,0.00){\line(1,0){50.00}}
\put(0.00,0.00){\line(0,1){50.00}}
\put(50.00,0.00){\line(0,1){50.00}}
\put(0.00,50.00){\line(1,0){50.00}}

\linethickness{0.3mm}
\put(12.00,30.00){\line(1,0){20.00}}
\put(12.00,30.00){\line(0,1){20.00}}
\put(32.00,30.00){\line(0,1){20.00}}
\put(12.00,50.00){\line(1,0){20.00}}

\linethickness{0.3mm}
\put(10.00,0.00){\line(1,0){30.00}}
\put(10.00,0.00){\line(0,1){30.00}}
\put(40.00,00.00){\line(0,1){30.00}}
\put(10.00,30.00){\line(1,0){30.00}}

\put(25.00,15.00){\makebox(0,0)[cc]{A}}

\put(22.00,40.00){\makebox(0,0)[cc]{B}}

\end{picture}
\end{center}
Then rotating the tiling 90 degrees produces
two inland tiles whose side lengths add up
to 1, contradicting Lemma~\ref{inlandtiles}.
\end{proof}

From now on we will assume, without
loss of generality, that the left
coastal tile $A$ is a corner tile, with
a corner at $(0,0)$. 

Note that we can further assume, without loss of
generality, that $B$ is also a
corner tile, with a corner at $(1,0)$.
For any tiling where $B$ has 
a corner at $(1,b)$, with $b>0$,
there is a similar tiling, with the
same total edge length, where $B$
has a corner at $(1,0)$.

\begin{center}
\unitlength 0.5mm
\begin{picture}(30.00,50.00)(0,0)

\linethickness{0.15mm}
\put(0.00,0.00){\line(1,0){30.00}}
\put(0.00,0.00){\line(0,1){50.00}}
\put(30.00,0.00){\line(0,1){50.00}}
\put(0.00,50.00){\line(1,0){30.00}}

\linethickness{0.3mm}
\put(0.00,20.00){\line(1,0){30.00}}
\put(0.00,20.00){\line(0,1){30.00}}
\put(30.00,20.00){\line(0,1){30.00}}
\put(0.00,50.00){\line(1,0){30.00}}

\put(15.00,15.00){\makebox(0,0)[cc]{\footnotesize{some tiles}}}

\put(15.00,10.00){\makebox(0,0)[cc]{\footnotesize{here}}}

\put(15.00,35.00){\makebox(0,0)[cc]{B}}
\end{picture}
\qquad \qquad
\begin{picture}(30.00,50.00)(0,0)

\linethickness{0.15mm}
\put(0.00,0.00){\line(1,0){30.00}}
\put(0.00,0.00){\line(0,1){50.00}}
\put(30.00,0.00){\line(0,1){50.00}}
\put(0.00,50.00){\line(1,0){30.00}}

\linethickness{0.3mm}
\put(0.00,0.00){\line(1,0){30.00}}
\put(0.00,0.00){\line(0,1){30.00}}
\put(30.00,0.00){\line(0,1){30.00}}
\put(0.00,30.00){\line(1,0){30.00}}

\put(15.00,45.00){\makebox(0,0)[cc]{\footnotesize{same tiles}}}

\put(15.00,40.00){\makebox(0,0)[cc]{\footnotesize{here}}}

\put(15.00,15.00){\makebox(0,0)[cc]{B}}
\end{picture}
\end{center}

Clearly it does no harm to assume
that $s_A\geq s_B$. (Simply reflect
the tiling across the line $x=1/2$ if
necessary.) 
Thus our tiling contains a big tile
$A$, with a corner at $(0,0)$,
where $s_A\geq 1/2$. There is
also a tile $B$, with a corner
at $(1,0)$, where $s_B=1-s_A$.
Similarly (see the note after
Lemma~\ref{twobigtiles})
there is tile $B'$, with $s_{B'}=1-s_A$,
which we can assume has a corner
at $(0,1)$. 
This is enough to show that $\psi_3(8)$
is not equal to $\psi_2(8)$.

\begin{thm}
$\psi_2(8)>\psi_3(8)$. 
\end{thm}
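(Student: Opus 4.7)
My strategy is to bracket the number $8/3$ strictly between $\psi_3(8)$ and $\psi_2(8)$: I will show $\psi_2(8) \ge 8/3$ by an explicit construction and $\psi_3(8) < 8/3$ by sharpening the Cauchy--Schwarz bound of Lemma~\ref{CS} with a divisibility obstruction along a boundary edge.

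For the lower bound on $\psi_2(8)$, take the tiling of the unit square by nine congruent squares of side $1/3$ (the $3 \times 3$ grid) and delete any one of them. What remains is a right packing of the unit square by eight squares with $\sigma = 8/3$, so $\psi_2(8) \ge 8/3$.

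For the upper bound on $\psi_3(8)$, let $\C$ be any right tiling by eight squares, normalized as in the preceding lemmas so that $A = [0, 1-t]^2$, $B = [1-t, 1] \times [0, t]$, and $B' = [0, t] \times [1-t, 1]$ for some $t \in (0, 1/2)$. Then $s_A + s_B + s_{B'} = 1 + t$, and the remaining five tiles must fill an L-shaped region of area $1 - (1-t)^2 - 2t^2 = 2t - 3t^2$. Lemma~\ref{CS} applied to those five tiles yields
\[
\sigma(\C) \le 1 + t + \sqrt{5(2t - 3t^2)}.
\]
A short calculus argument shows that this right-hand side attains its maximum on $(0, 1/2)$ at $t = 5/12$, where it equals $8/3$; for every other $t$ the bound is already strictly less than $8/3$.

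The delicate step is to eliminate the case $t = 5/12$. Equality in Lemma~\ref{CS} would force the five remaining tiles to be congruent, each of side $\sqrt{(2t - 3t^2)/5} = 1/4$. But the L-shape has a left boundary edge lying on the line $x = 5/12$ between $y = 7/12$ and $y = 1$, of length $5/12$; any tile meeting this edge must have its entire left side flush with it, as otherwise it would overlap $B'$. Thus the edge is partitioned into segments of length exactly $1/4$. Since $5/12$ is not an integer multiple of $1/4$, this is impossible, giving $\sigma(\C) < 8/3$ at $t = 5/12$ as well. Therefore $\psi_3(8) < 8/3 \le \psi_2(8)$. The main obstacle is precisely this final combinatorial step, since the Cauchy--Schwarz bound is already tight at the optimal $t$ and the strict separation has to come from a geometric rather than analytic argument.
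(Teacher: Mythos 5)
Your proof is correct and follows essentially the same route as the paper: the $3\times3$-minus-one-tile construction for $\psi_2(8)\ge 8/3$, the Cauchy--Schwarz bound $1+t+\sqrt{5(2t-3t^2)}$ maximized at $t=5/12$, and a divisibility obstruction to kill the equality case. The only (immaterial) difference is the edge you use for that last step: you partition the right edge of $B'$, of length $5/12$, into sides of length $1/4$, whereas the paper partitions the remaining $7/12$ of the right coast above $B$; both lengths fail to be multiples of $1/4$, so either works.
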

\begin{proof}
In the standard $3\times 3$ tiling, remove
one tile. We then have a packing with 8 squares
with total edge length $\frac83$. 
Thus $\psi_2(8)\geq \frac83$, so 
all we need to show is that $\psi_3(8)<\frac83$.

Let $t=s_B$. The three tiles $A$, $B$, $B'$
have total area $2t^2+(1-t)^2=1-2t+3t^2$, leaving
an area of $2t-3t^2$ for the remaining 5
tiles. By Lemma~\ref{CS}, the total side lengths of these 5 tiles
is at most $\sqrt{5(2t-3t^2)}=\sqrt{10t-15t^2}$.
Thus the total side lengths of all 8
tiles is at most $1+t+\sqrt{10t-5t^2}$. It is straightforward
to verify that this function has a maximum
at $t=5/12$, with a maximum value
of $8/3$. Thus we get that $\psi_3(8)\leq 8/3$.

Equality is achieved only if $t=5/12$ and
the 5 tiles are all the same size. Let us figure
out what this size is. The 5 tiles have a total
area of $2t-3t^2=2\cdot (5/12)-3\cdot (5/12)^2
=45/144$, so each tile has area $9/144$,
i.e., each tile has side length $3/12$. 
Now $B$ (and $B'$) must have an edge on the
border of the unit square, so the remaining
$7/12$ must be covered by tiles of side length
$3/12$, i.e., an integer multiple of $3/12$
must be equal to $7/12$.
This is impossible. Thus the optimal
tiling must either have $t\neq 5/12$ or it
must have 5 remaining tiles of different
sizes. In either case, the total side length
will be smaller than $8/3$. Hence
$\psi_3(8)<8/3$, as claimed.
\end{proof}

We will now proceed with the proof that
$\psi_3(8)=2.6$. Suppose $P$ is an 
optimal tiling, i.e., $\sigma(P)$ is
maximal. We know that $\sigma(P)\geq 2.6$.
As always, we assume without harm that
$P$ contains a corner tile $A$ with
a corner at $(0,0)$; there are also
at least two tiles $B$ and $B'$ with
edge lengths $s_B=s_{B'}=1-s_A$.
\begin{lemma}\label{atmost3}
There are at most 3 tiles with
edge lengths $s_B$.
\end{lemma}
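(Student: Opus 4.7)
The plan is to argue by contradiction, adapting the area-plus-Cauchy--Schwarz strategy used in the proof of the preceding theorem. Suppose there are $k \geq 4$ tiles of side length $t := s_B = 1 - s_A$. Since $s_A \geq 1/2$ we have $t \leq 1/2$; the equality $t = 1/2$ is ruled out immediately, because four tiles of side $1/2$ already fill the unit square, contradicting $n = 8$. So we may assume $t < 1/2$, which means $A$ is not itself one of the $k$ tiles.

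With this setup, the $k$ tiles together with $A$ have combined area $(1-t)^2 + kt^2$, leaving area $2t - (k+1)t^2$ for the remaining $7-k$ tiles. Applying Lemma~\ref{CS} to those remaining tiles yields
\[
\sigma(P) \;\leq\; (1-t) + kt + \sqrt{(7-k)\bigl(2t - (k+1)t^2\bigr)} \;=:\; f_k(t).
\]
It therefore suffices to show that $\max_t f_k(t) < 13/5$ for each $k \in \{4,5,6,7\}$, since any such bound contradicts the standing assumption $\sigma(P) \geq 2.6 = 13/5$.

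This last step is a routine one-variable optimization of $f_k$ on the interval where the radicand is nonnegative, and the critical point is in each case the root of a quadratic. The tightest (and only delicate) case is $k = 4$: the maximum of $f_4$ evaluates to $(8 + 2\sqrt{6})/5$, which is strictly less than $13/5$ precisely because $2\sqrt{6} < 5$, i.e.\ because $24 < 25$. For $k = 5$ and $k = 6$ the maxima come out noticeably smaller (one obtains $(5+\sqrt{7})/3$ in the $k = 5$ case), and $k = 7$ forces $t = 1/4$ and $\sigma(P) \leq 5/2$. The main obstacle is therefore just verifying the $k = 4$ bound is genuinely strict; once the numerical coincidence $24 < 25$ is noted, the lemma follows.
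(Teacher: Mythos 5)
Your proof is correct and follows essentially the same route as the paper: for $k=4$ you reproduce its exact computation (area of the four $t$-tiles plus $A$, Cauchy--Schwarz on the remaining three, maximum $(8+2\sqrt{6})/5<2.6$). The only differences are that you treat $k=5,6,7$ by separate explicit optimizations where the paper simply remarks the situation is ``even worse'' (in fact the $k=4$ bound already applies verbatim whenever there are \emph{at least} four such tiles, so the extra cases are not needed), and that you explicitly rule out $t=1/2$ so that $A$ is not double-counted---a small point the paper leaves implicit.
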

\begin{proof}
Suppose there are 4 tiles with
edge lengths $t=s_B$. Then these
4 tiles, together with $A$, have
total area $4t^2+(1-t)^2$, leaving
an area of $2t-5t^2$ for the 
remaining 3 tiles. The edge lengths
of these 3 tiles sum up to
at most $\sqrt{3(2t-5t^2)}=
\sqrt{6t-15t^2}$, so the total edge
length of all 8 tiles is at most
$4t+(1-t)+\sqrt{6t-15t^2}=
1+3t+\sqrt{6t-15t^2}$. It is straightforward
to calculate that this function
has a maximum value of $\frac{8+2\sqrt{6}}{5}
<2.58$ (at $t=\frac{4+\sqrt{6}}{20}$).
Since $\sigma(P)\geq 2.6$, any tiling
with 4 tiles of edge length $s_B$ cannot
be optimal. The situation is even
worse if the tiling has more than 4 tiles
of edge length $s_B$.
\end{proof}

\begin{lemma}\label{exactly3}
In an optimal tiling, there are exactly 3
tiles with edge lengths $s_B$.
\end{lemma}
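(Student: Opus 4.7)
My plan is to assume for contradiction that $B$ and $B'$ are the only tiles of side length $t := s_B$, and derive $\sigma(P) < 2.6$, contradicting the optimality of $P$. The 5 remaining tiles must tile the L-shape $L = [t,1]\times[1-t,1]\cup[1-t,1]\times[t,1]$, where $t < 1/2$ strictly (since a square cannot be tiled by 5 squares), and each of these tiles has side at most $t$, bounded by the arm widths of $L$.

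I would then classify the 5 tiles by their position. Let $n_1$ be the number touching $B$'s top edge. If $n_1 = 1$, that single tile has width $t$ and is a third tile of size $t$, contradiction; so $n_1 \geq 2$. Symmetrically $n_2 \geq 2$ for tiles touching $B'$'s right edge. Since $t < 1/2$, no tile lies in both groups. The tile covering the corner $(1,1)$ cannot belong to either (either would require side $\geq 1-t > t$), giving a fifth tile $E$ with $n_3 \geq 1$. The count $n_1 + n_2 + n_3 = 5$ forces $(n_1, n_2, n_3) = (2,2,1)$.

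Next I would exploit edge-matching to pin down the configuration. Writing $U_1, U_2$ (left to right) for the above-$B$ tiles with $u_1 + u_2 = t$, $R_1, R_2$ (bottom to top) for the right-of-$B'$ tiles with $r_1 + r_2 = t$, and $E = [1-s_E, 1]^2$, the top edge $y = 1$ must be tiled by $B', R_2, E$ alone (no other tile reaches $y = 1$), forcing $t + r_2 = 1 - s_E$; symmetrically along $x = 1$ we obtain $t + u_2 = 1 - s_E$. Hence $u_2 = r_2 =: b$, $u_1 = r_1 = t - b$, $s_E = 1 - t - b$, and $\sigma(P) = (1-t) + 2t + (2t + s_E) = 2 + 2t - b$.

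The main obstacle is the final inconsistency check. I would collect three constraints on $b$: (a) $\sigma(P) \geq 13/5$ gives $b \leq 2t - 3/5$; (b) $s_E < t$ (else $E$ would be a third tile of size $t$) gives $b > 1 - 2t$; and (c) $U_1$ and $R_1$, with $x$-projections $[1-t, 1-b]$ and $[t, 2t-b]$, cannot overlap in their interiors, giving $b \geq 3t - 1$ (automatic when $t \leq 1/3$). For $t \leq 2/5$ one has $1 - 2t \geq 2t - 3/5$, so (a) and (b) together give $b > 1 - 2t \geq 2t - 3/5 \geq b$, a contradiction. For $t \geq 2/5$ one has $3t - 1 \geq 2t - 3/5$, so (a) and (c) together force the equality $b = 2t - 3/5 = 3t - 1$, i.e., $t = 2/5$ and $b = 1/5 = 1 - 2t$, which violates the strict inequality in (b). In every case the constraints are inconsistent, completing the contradiction.
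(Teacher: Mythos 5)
Your proof is correct, and it closes the argument by a genuinely different route than the paper. The configuration analysis is essentially the paper's: the counting step forcing exactly two tiles above $B$, two to the right of $B'$, and one corner tile at $(1,1)$, followed by edge-matching along the two coasts to get $u_2=r_2=b$, $u_1=r_1=t-b$, $s_E=1-t-b$, reproduces the paper's tiles $D,E,D',E',C$ and its relations $s_E=1-s_B-s_C$, $s_D=s_B-s_E$. The divergence is in the endgame. The paper notes that the forced configuration is symmetric about the diagonal $y=x$ and argues that the diagonal segment between $A$ and the corner tile must meet the interior of some remaining tile, hence by symmetry the interiors of two distinct tiles --- a purely geometric contradiction that never uses optimality. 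You instead extract three numerical constraints --- $b\le 2t-\frac35$ from $\sigma(P)\ge \frac{13}{5}$, $b>1-2t$ from $s_E<t$, and $b\ge 3t-1$ from disjointness of $U_1$ and $R_1$ --- and show they are jointly infeasible for every $t$. Both work; the paper's closing is shorter and independent of the lower bound $2.6$, while yours replaces the somewhat delicate ``a diagonal line must cross a tile interior'' claim with explicit inequalities that are mechanical to check. One small presentational point: in constraint (c) you cite only the $x$-projections of $U_1$ and $R_1$; to conclude that the squares' interiors overlap you also need their $y$-projections to overlap, but since the $y$-projections are the same two intervals swapped, the condition is again $b<3t-1$, so the constraint $b\ge 3t-1$ does follow --- it is just worth saying.
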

\begin{proof}
By Lemma~\ref{atmost3} we need to show
that there are at least 3 tiles with edge
lengths $s_B$. We already know 
that
there are 2 tiles, $B$ and $B'$, with
$s_{B'}=s_B$. Suppose there are
no other tiles with edge length $s_B$;
we will derive a contradiction.

Recall that $B$ can be assumed
to be a right coastal tile
with a corner at $(1,0)$
and that $B'$ can be assumed to
have a corner at
$(0,1)$. Thus we have the following configuration.
\begin{center}
\unitlength 0.5mm
\begin{picture}(50.00,50.00)(0,0)

\linethickness{0.15mm}
\put(0.00,0.00){\line(1,0){50.00}}
\put(0.00,0.00){\line(0,1){50.00}}
\put(50.00,0.00){\line(0,1){50.00}}
\put(0.00,50.00){\line(1,0){50.00}}


\linethickness{0.3mm}
\put(0.00,30.00){\line(1,0){20.00}}
\put(0,30.00){\line(0,1){20.00}}
\put(20.00,30.00){\line(0,1){20.00}}
\put(0.00,50.00){\line(1,0){20.00}}

\linethickness{0.3mm}
\put(30.00,0.00){\line(1,0){20.00}}
\put(30.00,0.00){\line(0,1){20.00}}
\put(50.00,0.00){\line(0,1){20.00}}
\put(30.00,20.00){\line(1,0){20.00}}

\linethickness{0.3mm}
\put(0.00,0.00){\line(1,0){30.00}}
\put(0.00,0.00){\line(0,1){30.00}}
\put(30.00,0.00){\line(0,1){30.00}}
\put(0.00,30.00){\line(1,0){30.00}}

\put(15.00,15.00){\makebox(0,0)[cc]{A}}


\put(10.00,40.00){\makebox(0,0)[cc]{B$'$}}

\put(40.00,10.00){\makebox(0,0)[cc]{B}}

\end{picture}
\end{center}

If $s_A=s_B=1/2$, then the remaining
empty square of size $1/2$-by-$1/2$ needs
to be tiled by 5 squares. This is impossible. 
It follows that $s_A>1/2$
(and so $s_B<1/2$). 

Let $C$ denote the tile with a corner
at $(1,1)$. 
\begin{center}
\unitlength 0.5mm
\begin{picture}(50.00,50.00)(0,0)

\linethickness{0.15mm}
\put(0.00,0.00){\line(1,0){50.00}}
\put(0.00,0.00){\line(0,1){50.00}}
\put(50.00,0.00){\line(0,1){50.00}}
\put(0.00,50.00){\line(1,0){50.00}}

\linethickness{0.3mm}
\put(35.00,35.00){\line(1,0){15.00}}
\put(35.00,35.00){\line(0,1){15.00}}
\put(50.00,35.00){\line(0,1){15.00}}
\put(35.00,50.00){\line(1,0){15.00}}

\linethickness{0.3mm}
\put(0.00,30.00){\line(1,0){20.00}}
\put(0,30.00){\line(0,1){20.00}}
\put(20.00,30.00){\line(0,1){20.00}}
\put(0.00,50.00){\line(1,0){20.00}}

\linethickness{0.3mm}
\put(30.00,0.00){\line(1,0){20.00}}
\put(30.00,0.00){\line(0,1){20.00}}
\put(50.00,0.00){\line(0,1){20.00}}
\put(30.00,20.00){\line(1,0){20.00}}

\linethickness{0.3mm}
\put(0.00,0.00){\line(1,0){30.00}}
\put(0.00,0.00){\line(0,1){30.00}}
\put(30.00,0.00){\line(0,1){30.00}}
\put(0.00,30.00){\line(1,0){30.00}}

\put(15.00,15.00){\makebox(0,0)[cc]{A}}

\put(42.50,42.50){\makebox(0,0)[cc]{C}}

\put(10.00,40.00){\makebox(0,0)[cc]{B$'$}}

\put(40.00,10.00){\makebox(0,0)[cc]{B}}

\end{picture}
\end{center}

There are 4 tiles that remain to
be placed. At least 2 must share a border
on the line $y=s_B$ with
$B$ (if there were only 1, then it
must have edge length $s_B$); similarly,
at least 2 must share a border
on the line $x=s_B$ with $B'$.
Thus there are exactly 2 tiles on
top of $B$: one a right coastal tile
(call it $E$) and one an inland
tile, with a corner at $(s_A, s_B)$
(call it $D$). Similarly, there
are 2 tiles to the right of $B'$,
one on the north border (call it
$E'$) and one with a corner
at $(s_B,s_A)$ (call it $D'$).

Note that $s_E=s_{E'}=1-s_B-s_C$;
also, $s_D=s_{D'}=s_B-s_E$. 
Thus the tiling is symmetric
with respect to the main diagonal
$y=x$. 

Now consider the line connecting
the northwest corner of $A$ 
to the southeast corner of $C$.
Since this is a diagonal line, it
must intersect the interior
of a tile, either $D$ or $E$
or $D'$ or $E'$. But the symmetry
of the tiling indicates that
the aforementioned line must
intersect the interior of \emph{two}
tiles, contradicting our requirement
that the tiles don't overlap. 

\end{proof}

We note here a by-product of 
the proof: it cannot be the case
that $s_A=s_B=1/2$. Thus 
we have $s_A>s_B$.

Denote by $B''$ the third
tile whose side length is
equal to $s_B$. As above,
we can assume that $B''$
lies adjacent to $B'$. 

\begin{thm}
$\psi_3(8)=\frac{13}{5}$.
\end{thm}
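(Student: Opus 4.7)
The plan is to prove the matching upper bound $\sigma(P)\leq 13/5$. From the preceding lemmas, $P$ contains a corner tile $A$ at $(0,0)$ of side $1-t$, three tiles $B$, $B'$, $B''$ of common side $t$ (with $B$ at $(1,0)$, $B'$ at $(0,1)$, and $B''$ immediately right of $B'$), and four remaining tiles filling the L-shaped region $L_t = R_1 \cup R_2$, where $R_1 = [1-t,1]\times[t,1-t]$ is the right arm and $R_2 = [2t,1]\times[1-t,1]$ is the top arm.

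First I would establish two bounds on the sum $S$ of side lengths of the four remaining tiles. Cauchy--Schwarz applied to their total area $2t-4t^2$ yields $S\leq 2\sqrt{2t-4t^2}$, so $\sigma(P)\leq 1+2t+2\sqrt{2t-4t^2}$; squaring and rearranging shows this is $\leq 13/5$ exactly when $(5t-2)(25t-8)\geq 0$, i.e.\ when $t\leq 8/25$ or $t\geq 2/5$. Since every tile in $L_t$ must sit inside one arm (or straddle the junction through its narrower cross-section), each such tile has side at most $\min(t,1-2t)$, so $S\leq 4\min(t,1-2t)$; for $t\geq 2/5$ this gives $\sigma(P)\leq 5-6t\leq 13/5$.

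Together, these bounds dispose of every $t$ outside the interval $(8/25,2/5)$. For $t$ in this critical range, the crucial additional input is Lemma~\ref{atmost3}, which forbids any fourth tile of side exactly $t$. In particular, the top-right corner tile $C$ has side $c<t$ (and $c\leq 1-2t$). I would then carry out a case analysis on the structure of the four-square tiling of $L_t$. The key structural fact is that an $a\times b$ rectangle (with $a\leq b$) tiled by $k\in\{2,3\}$ squares admits very few possibilities---essentially $k$ equal squares of side $a$ with $b=ka$, or (for $k=3$) one square of side $a$ together with two of side $a/2$ when $b=3a/2$; combined with the exclusion of side-$t$ tiles, the admissible configurations reduce to a short list.

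The main obstacle is this case analysis on $(8/25,2/5)$. Subcases to treat include: $c=1-2t$ (so $C$ spans the width of the top arm and the remaining three tiles fill a step-shaped region, to which a 3-tile Cauchy--Schwarz bound applies); smaller $c$ forcing additional tiles in the top row beside $C$; and tilings with a tile straddling the junction line $y=1-t$. In each admissible configuration, the no-side-$t$ constraint combined with area accounting and boundary-length identities (tiles touching $y=1$ have sides summing to $1-2t$, and tiles touching $x=1$ have sides summing to $1-t$) yields a polynomial inequality in $t$ forcing $\sigma(P)\leq 13/5$, with equality achieved only at $t=2/5$ via the construction with four squares of side $1/5$. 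Assembling these subcases completes the proof.
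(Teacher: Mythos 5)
Your two closed-form bounds are correct and do exactly what you claim: the Cauchy--Schwarz estimate $\sigma(P)\leq 1+2t+2\sqrt{2t-4t^2}$ is at most $13/5$ precisely when $(5t-2)(25t-8)\geq 0$, and the width bound $S\leq 4(1-2t)$ handles $t\geq 2/5$ a second way. But the whole theorem then hinges on the interval $8/25<t<2/5$, and there you offer a program rather than a proof: ``the admissible configurations reduce to a short list'' and ``yields a polynomial inequality in $t$'' are precisely the assertions that would have to be established case by case, and they are not. That is a genuine gap, and it sits at the heart of the argument. Note also that your description of the leftover region as $[1-t,1]\times[t,1-t]\cup[2t,1]\times[1-t,1]$ is only correct for $t\geq 1/3$; for $8/25<t<1/3$ the region is a different L-shape (the top arm runs from $x=2t$ to $x=1-t$ and the right arm extends to $y=1$), so the geometry changes shape inside your critical interval and the deferred case analysis would have to branch accordingly.

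The paper closes this gap by a short structural argument that rules out $t<2/5$ outright, so the interval $(8/25,2/5)$ never has to be examined. Since each coast contributes exactly $1$ to $\sigma(P)$ and the only inland tiles are $B''$ and the tile $D$ with a corner at $(1-t,t)$, one has the exact identity $\sigma(P)=2+t+s_D$. If $s_D>\tfrac12 t$, the two right-coastal tiles other than $B$ and $C$ each have side $t-s_D<\tfrac12 t$, whence $\sigma(\C_1)<2t+s_C\leq 1$ (the last inequality read off the north border), a contradiction; so $s_D\leq\tfrac12 t$ and $\sigma(P)\leq 2+\tfrac32 t$. Optimality, $\sigma(P)\geq 2.6$, then forces $t\geq 2/5$, and your own Cauchy--Schwarz bound finishes the proof. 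To repair your outline you would either have to carry out the full case analysis on $(8/25,2/5)$, including the sub-$1/3$ geometry, or---much better---find an identity like $\sigma(P)=2+t+s_D$ that converts optimality into a lower bound on $t$.
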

\begin{proof}
Recall that $C$ is the tile
with a corner at $(1,1)$. We
have a configuration
similar to the following.

\begin{center}









\unitlength 0.5mm
\begin{picture}(50.00,50.00)(0,0)

\linethickness{0.15mm}
\put(0.00,0.00){\line(1,0){50.00}}
\put(0.00,0.00){\line(0,1){50.00}}
\put(50.00,0.00){\line(0,1){50.00}}
\put(0.00,50.00){\line(1,0){50.00}}

\linethickness{0.3mm}
\put(43.00,43.00){\line(1,0){7.00}}
\put(43.00,43.00){\line(0,1){7.00}}
\put(50.00,43.00){\line(0,1){7.00}}
\put(43.00,50.00){\line(1,0){7.00}}

\linethickness{0.3mm}
\put(0.00,30.00){\line(1,0){20.00}}
\put(0.00,30.00){\line(0,1){20.00}}
\put(20.00,30.00){\line(0,1){20.00}}
\put(0.00,50.00){\line(1,0){20.00}}

\linethickness{0.3mm}
\put(20.00,30.00){\line(1,0){20.00}}
\put(20.00,30.00){\line(0,1){20.00}}
\put(40.00,30.00){\line(0,1){20.00}}
\put(20.00,50.00){\line(1,0){20.00}}

\linethickness{0.3mm}
\put(30.00,0.00){\line(1,0){20.00}}
\put(30.00,0.00){\line(0,1){20.00}}
\put(50.00,0.00){\line(0,1){20.00}}
\put(30.00,20.00){\line(1,0){20.00}}

\linethickness{0.3mm}
\put(0.00,0.00){\line(1,0){30.00}}
\put(0.00,0.00){\line(0,1){30.00}}
\put(30.00,0.00){\line(0,1){30.00}}
\put(0.00,30.00){\line(1,0){30.00}}

\put(15.00,15.00){\makebox(0,0)[cc]{A}}

\put(46.50,46.50){\makebox(0,0)[cc]{\footnotesize{C}}}

\put(40.00,10.00){\makebox(0,0)[cc]{B}}

\put(10.00,40.00){\makebox(0,0)[cc]{B$'$}}

\put(30.00,40.00){\makebox(0,0)[cc]{B$''$}}

\end{picture}

\end{center}

Three tiles remain to be placed. Two of them
are right coastal tiles and one---the one
with a corner at $(s_A,s_B)$, which
as before we call $D$---is an inland tile.
Thus there are two inland tiles, $B''$ and $D$,
and the total edge length of this tiling is
$2+s_{B}+s_D$. 

If $s_D>\frac12 s_B$, then the two
right coastal tiles besides $B$ and $C$
must each have edge length $s_B-s_D
<\frac12 s_B$; thus $\sigma(\C_1)<
s_B+\frac12 s_B+\frac12 s_B + s_C=
2s_B+s_C$. But looking at the north
border we see that $2s_B+s_C\leq 1$,
so $\sigma(\C_1)<1$, a contradiction.
Thus we must have $s_D\leq \frac12s_B$,
so the total length of the tiling is at
most $2+s_B+\frac12 s_B=2+\frac32 s_B$.
Therefore $2+\frac32 s_B\geq \frac{13}{5}$,
i.e., $s_B\geq \frac25$.

Now consider just the tiles $A$, $B$, $B'$,
and $B''$. Let $t=s_B$ as before.
These tiles have total area
$3t^2+(1-t)^2$, leaving an area of
$2t-4t^2$ to be covered with 4 tiles.
By Lemma~\ref{CS},
the total edge lengths of these 4 tiles
is at most $\sqrt{4(2t-4t^2)}$; thus
the total edge length of all tiles
is at most $3t+(1-t)+\sqrt{4(2t-4t^2)}$.
For $t\geq \frac25$, this function
has a maximum value of $\frac{13}{5}$
(which occurs at $t=\frac25$); thus
$\sigma(\C)\leq \frac{13}{5}$, as required.

\end{proof}

\textbf{Note}: I do not know the value of $\psi_3(k^2-1)$
for $k>3$. It is possible to show that $\psi_3(k^2-1)\geq
k-\frac{1}{k-1}$ as follows. Start with a standard
$(k+1)\times (k+1)$ tiling, and replace a $k\times k$
subsquare with a standard $(k-1)\times (k-1)$ tiling. We
now have a tiling with $(k+1)^2-k^2+(k-1)^2 = k^2+2$
tiles. There are (a) $2k+1$ tiles with edge length
$1/(k+1)$, and (b) $(k-1)^2$ tiles with edge length
$\frac{k}{k^2-1}$. Pick any $2\times 2$ subsquare in (b)
and replace
it with one big square; we now have a tiling with
$k^2-1$ tiles. The total edge length of this tiling
is $\frac{2k+1}{k+1} + \frac{k(k-1)^2}{k^2-1} - \frac{2k}{k^2-1}
= k-\frac{1}{k-1}$.

\end{document}